\documentclass[12pt]{article}
\usepackage{amsmath,amssymb}
\usepackage{amsthm}

%\documentclass[a4paper, 12pt]{article}
%\documentclass[a4paper,openany,draft]{book}
%\usepackage[T2A]{fontenc}
%\usepackage[cp866]{inputenc}        % Кодировка входного документа;
                                    % при необходимости, вместо cp1251
                                    % можно указать cp866 (Alt-кодировка
                                    % DOS) или koi8-r.

%\usepackage[english,russian]{babel} % Включение русификации, русских и
                                    % английских стилей и переносов
%\usepackage{amsmath,amsfonts,amsthm,amssymb,amsbsy,amstext,amscd,amsxtra,multicol}
%\usepackage{txfonts,pxfonts}
\usepackage{array}
%mdwtab}
\usepackage[matrix,arrow,curve]{xy}
\usepackage{color}
\usepackage{bbm}

\sloppy

\oddsidemargin= 1 pt \textwidth=450 pt \textheight=640 pt
\topmargin= 0 pt \headheight= 0 pt

\newcommand \trdeg {{\rm {tr.deg}}}

\newcommand\Spec{\operatorname{Spec}}

\newcommand\OO{{\mathcal O}}
\newcommand\Z{{\mathbb Z}}

\newcommand{\Cb}{{\mathbb C}}
\newcommand{\Pb}{{\mathbb P}}
\newcommand{\Qb}{{\mathbb Q}}
\newcommand{\Rb}{{\mathbb R}}

\theoremstyle{plain}
\newtheorem{theor}{Theorem}
\newtheorem{prop}[theor]{Proposition}

\newtheorem{lemma}[theor]{Lemma}

\theoremstyle{remark}
\newtheorem{rmk}[theor]{Remark}
\newtheorem{examp}[theor]{Example}

\theoremstyle{definition}

%\def\hm#1{#1\nobreak\discretionary{}{\hbox{$#1$}}{}}
%\righthyphenmin2

\date{}

\begin{document}

\title{Generation of modules and transcendence degree of zero-cycles}

\author{Sergey Gorchinskiy \\ \\
\small{Steklov Mathematical Institute, Moscow, Russia}\\
\small{e-mail: {\tt gorchins@mi.ras.ru}}}

\maketitle

\begin{abstract}
We construct an example of a regular algebra over $\Cb$ of dimension $d$ and a rank~$r$ projective module over it which is not generated by $d+r-1$ elements. This strengthens an example by Swan over the field of real numbers.
\end{abstract}

\bigskip \centerline{\it To I.\,R.\,Shafarevich, with a profound respect}
\bigskip

\section{Introduction}

Let $R$ be a unital finitely generated commutative algebra over a field $k$ and let $M$ be a finitely generated projective $R$-module of rank $r$. Let $d$ be the Krull dimension of $R$. It follows from the Forster--Swan theorem~\cite{For}, \cite{Swan} that $M$ is generated by $d+r$ elements over $R$ (actually, the theorem deals with rings and modules of a more general type). In our particular case this result can be also easily deduced from the Bertini theorem.

A natural question is wether the lawer bound $d+r$ on the number of generators is exact. In other words, wether there exist $R$ and $M$ as above such that $M$ is not generated by $d+r-1$ elements (the question is trivial for the case $d=1$). Swan~\cite[Th.4]{Swan1} has constructed such examples for all~$d$,~$r$ and \mbox{$k=\Rb$}. His argument essentially uses that the field $\Rb$ is not algebraically closed. Up to the author's knowledge, there was no an example in the literature with an algebraically closed field $k$. The aim of this paper is to construct examples of this type.

Using Chern classes with values in Chow groups, we reduce the initial problem to a question about a non-vanishing of elements in Chow groups of zero-cycles on affine varieties (this idea closely follows~\cite{Swan1}, where Stiefel--Whitney classes are used). Then we apply  transcendence degree of zero-cycles~\cite{GG} in order to get the non-vanishing. This approach is essentially based on the Bloch--Srinivas decomposition of diagonal~\cite{BS}.

Examples of algebras and modules as in this paper can be further used in order to construct new non-trivial examples of Jordan algebras with the help of a method from~\cite{Zh}. However, this requires that $R$ and $M$ satisfy certain additional conditions related to derivations. The actual construction of such examples remains an open problem.

This note arose from discussions with C.\,Shramov of a related question posed by V.\,N.\,Zhelyabin. The author is deeply grateful to both of them for a stimulating and friendly atmosphere during their common stay in Novosibirsk in Fall 2011. The author was partially supported by RFBR grant 11-01-00145-a,
MK-4881.2011.1, NSh grant 5139.2012.1, and by AG Laboratory HSE, RF gov. grant, ag. 11.G34.31.0023.

\section{Main result and reduction to zero-cycles}

Here is the main result of the paper.

\begin{theor}\label{theor-main}
For all natural numbers $d$ and $r$, there exist a regular finitely generated algebra $R$ over $\Cb$ of dimension $d$ and a projective finitely generated \mbox{$R$-module} $M$ of rank~$r$ such that $M$ is not generated over $R$ by $d+r-1$ elements.
\end{theor}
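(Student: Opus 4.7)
The plan is to use Chern classes with values in Chow groups to translate the generator question into a non-vanishing statement for top zero-cycles, then produce such non-vanishing via the transcendence-degree technology of~\cite{GG}. Suppose a rank-$r$ projective $R$-module $M$ is generated by $n = d+r-1$ elements, so there is a short exact sequence $0 \to K \to R^n \to M \to 0$ with $K$ projective of rank $d-1$. The Whitney sum formula in the Chow ring of $X := \Spec R$ gives $c(M) \cdot c(K) = 1$; equivalently, $c(K)$ equals the total Segre class $s(M) := c(M)^{-1}$. Since $K$ has rank $d-1$, its Chern classes in degrees $\geq d$ vanish, so $s_d(M) = 0$ in $CH_0(X) = CH^d(X)$. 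Contrapositively, it suffices to exhibit for each $(d, r)$ a smooth affine variety $X$ of dimension $d$ over $\Cb$ and a rank-$r$ projective module $M$ with $s_d(M) \neq 0$ in $CH_0(X)$.

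\textbf{Reduction to line bundles.} Take $M = L \oplus \OO_X^{\oplus(r-1)}$ for a line bundle $L$; then $c(M) = 1 + c_1(L)$, and a direct computation gives $s_d(M) = (-1)^d c_1(L)^d$. So the task reduces to constructing, on some smooth affine variety $X/\Cb$ of dimension $d$, a line bundle $L$ whose top self-intersection $c_1(L)^d$ is non-zero in $CH_0(X)$.

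\textbf{Construction and main obstacle.} Start from a smooth projective variety $Y/\Cb$ of dimension $d$ with sufficiently rich $CH_0$, for instance $Y = E_1 \times \cdots \times E_d$, a product of elliptic curves (more generally any $Y$ with $h^{d,0}(Y) > 0$). Choose a line bundle $\overline L$ on $Y$ so that $c_1(\overline L)^d \in CH_0(Y)$ has maximal transcendence degree $d$ in the sense of~\cite{GG}: for $Y = E_1 \times \cdots \times E_d$ one may take $\overline L = \bigotimes_i \pi_i^* \OO(p_i)$ with sufficiently generic $p_i \in E_i(\Cb)$, giving $c_1(\overline L)^d = d! \cdot [(p_1, \ldots, p_d)]$. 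Now delete a suitable ample divisor $H \subset Y$ to form $X := Y \setminus H$ and set $L := \overline L|_X$. The principal technical hurdle is to verify that the class of $c_1(\overline L)^d$ survives in $CH_0(X) = CH_0(Y) / i_* CH_0(H)$; this is exactly where~\cite{GG} enters, via a Bloch--Srinivas-style decomposition of the diagonal~\cite{BS}: any zero-cycle supported on $H$ has transcendence degree at most $\dim H = d - 1 < d$ and hence cannot be rationally equivalent to the transcendence-degree-$d$ cycle $c_1(\overline L)^d$.
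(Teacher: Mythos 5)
Your overall strategy is the paper's: the Whitney/Segre computation is exactly Lemma~\ref{lemma-selfint}, and you seek the non-vanishing of $c_1(L)^d$ on an affine piece of a product of curves (elliptic curves, i.e.\ positive genus) via transcendence degree of zero-cycles. But the step you yourself call the ``principal technical hurdle'' is where the whole proof lives, and your justification of it does not work as stated. The sentence ``any zero-cycle supported on $H$ has transcendence degree at most $d-1$ and hence cannot be rationally equivalent to the transcendence-degree-$d$ cycle $c_1(\overline{L})^d$'' treats transcendence degree as if it were an invariant of rational equivalence classes; it is not (on $\Pb^d$, or on any $Y$ with $CH_0(Y)\cong\Z$, a very general point is rationally equivalent to a point defined over the base field). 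By definition the transcendence degree of a \emph{class} is the minimum over all representatives, so the only formal implication available is that of Remark~\ref{rmk-bound}: if $c_1(\overline{L})^d$ dies in $CH_0(Y\smallsetminus H)_{\Qb}$, then its class has transcendence degree $\le d-1$ over the base field $k_0$ --- and even this requires $H$ (not only $Y$) to be defined over $k_0$. You never fix such a $k_0$: your $E_i$ are only ``over $\Cb$'', the $p_i$ are ``sufficiently generic'' with no field specified, and you choose $H$ \emph{after} the $p_i$, so nothing prevents $H$ from involving the coordinates of the $p_i$, in which case the transcendence bound for cycles supported on $H$ fails. Asserting that the class $d!\,[(p_1,\dots,p_d)]$ ``has transcendence degree $d$'' is precisely the statement that needs proving, not a starting point.

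The genuine input that rules out the transcendence degree of the class dropping below $d$ is \cite[Th.~7]{GG}, i.e.\ Proposition~\ref{prop-trans}: with $Y$ and $H$ defined over a countable subfield $k_0\subset\Cb$ and $P=(p_1,\dots,p_d)$ of transcendence degree $d$ over $k_0$, the inequality $\trdeg([P]/k_0)<d$ in $CH_0(Y)_{\Qb}$ would force $H^0(Y,\Omega^d_Y)=0$, contradicting $h^{d,0}(Y)=1$ for a product of elliptic curves (this is where the Bloch--Srinivas decomposition \cite{BS} actually enters, inside that theorem). So the repair is bookkeeping plus a correct invocation: take the $E_i$ and the ample divisor $H$ defined over $\bar\Qb$, then choose $(p_1,\dots,p_d)$ of transcendence degree $d$ over $\bar\Qb$, deduce from a hypothetical vanishing of $[P]|_{Y\smallsetminus H}$ in rational Chow groups that $\trdeg([P]/\bar\Qb)<d$ via Remark~\ref{rmk-bound}, and obtain the contradiction from \cite[Th.~7]{GG}; then $d!\,[P]$ survives integrally and your Segre-class reduction finishes the argument exactly as in the paper.
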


Let $k$ be again an arbitrary field, put $U:=\Spec(R)$, let $N$ be a rank one projective \mbox{$R$-module}, $M=N\oplus R^{\oplus (r-1)}$, and let $L$ be the line bundle on $U$ that corresponds to~$N$. For an algebraic variety $X$ over $k$, by $CH^p(X)$ denote the Chow group of codimension~$p$ cycles on $X$. Given a vector bundle $E$ on $X$, by \mbox{$c_p(E)\in CH^p(X)$} denote the corresponding Chern class. The following lemma is a direct analogue of an argument from~\cite[Ex.2]{Swan1}.

\begin{lemma}\label{lemma-selfint}
Suppose that $M$ is generated by $d+r-1$ elements. Then we have that $c_1(L)^d=0$ in~$CH^d(U)$.
\end{lemma}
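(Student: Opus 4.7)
The plan is to turn the hypothesis into a splitting, compute Chern classes of the complementary summand by Whitney multiplicativity, and then read off the vanishing from the fact that Chern classes above the rank are zero.

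First I would observe that a surjection $R^{\oplus (d+r-1)} \twoheadrightarrow M$ exists by hypothesis, and since $M$ is projective it splits, giving
$$R^{\oplus (d+r-1)} \cong M \oplus K$$
for some finitely generated projective $R$-module $K$, automatically of rank $(d+r-1)-r = d-1$. Let $E$ denote the vector bundle on $U$ associated to $K$; it has rank $d-1$.

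Next I would apply the Whitney sum formula on the regular (hence smooth) variety $U$ to the displayed decomposition. Since the right-hand side is free, its total Chern class equals $1$, so
$$c(M)\cdot c(E)=1 \quad \text{in } CH^*(U).$$
Because $M\cong N\oplus R^{\oplus(r-1)}$ and the free summand contributes trivially, $c(M)=c(L)=1+c_1(L)$. Writing $\xi:=c_1(L)$, inversion of a unipotent element in the Chow ring yields
$$c(E) = (1+\xi)^{-1} = \sum_{i\ge 0}(-1)^i\,\xi^i,$$
so $c_i(E)=(-1)^i\xi^i$ for every $i\ge 0$.

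The last step is to invoke the standard vanishing $c_i(E)=0$ for $i>\operatorname{rank}(E)$. Since $\operatorname{rank}(E)=d-1$, in particular $c_d(E)=0$, and reading off the degree $d$ component of the expansion above gives $(-1)^d\xi^d=0$, i.e.\ $c_1(L)^d=0$ in $CH^d(U)$, as required. I do not anticipate a real obstacle here: everything reduces to routine Chern-class bookkeeping on a smooth affine variety, with the only point worth double-checking being that the rank of $K$ really is $d-1$ (so that the top-degree vanishing hits exactly the index $d$ that we want).
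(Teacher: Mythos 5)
Your proof is correct and follows essentially the same route as the paper: the paper takes the kernel $E$ of the surjection $\OO_U^{\oplus(d+r-1)}\to L\oplus\OO_U^{\oplus(r-1)}$ (which is just the unsplit form of your direct-sum decomposition), applies the Whitney formula to get $c(E)=(1+c_1(L))^{-1}$, and concludes from $c_d(E)=0$ since $E$ has rank $d-1$. Your splitting argument and the paper's exact-triple argument are the same computation.
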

\begin{proof}
By the condition of the lemma, there is an exact triple of vector bundles on $U$
$$
0\to E\to\OO_U^{\oplus {(d+r-1)}}\to L\oplus \OO_U^{\oplus(r-1)}\to 0\,.
$$
By the Whitney formula, we have
$$
1+c_1(E)+\ldots+c_d(E)=(1+c_1(L))^{-1}\,.
$$
In particular, $c_d(E)=(-1)^dc_1(L)^d$. On the other hand, since the rank of $E$ equals $d-1$, we obtain $c_d(E)=0$.
\end{proof}

Thus in order to construct our main example it would be helpful to show a non-vanishing of elements in $CH^d(U)$. It is important that the Chern classes here are with values in Chow groups and not in cohomology groups (Betti, de Rham, or \'etale), because degree $2d$ cohomology groups are trivial for any affine variety of dimension $d$.

\medskip

The following is an interpretation of the example~\cite[Ex.2,~Th.4]{Swan1} in terms of Chow groups.

\begin{examp}
Let $Q\subset\Pb^d$ be a smooth projective quadric without rational points over~$k$, $U=\Spec(R)$ be the complement to $Q$ in $\Pb^d$, $L=\OO_{\Pb^d}(1)|_U$, $N$ be the corresponding \mbox{$R$-module}, and let $M=N\oplus R^{\oplus(r-1)}$. Then $c_1(L)^d$ is the restriction to $U$ of the class of a point in $CH^d(\Pb^d)\cong\Z$. If $c_1(L)^d=0$, then it follows from the exact sequence
$$
CH^{d-1}(Q)\to CH^d(\Pb^d)\to CH^d(U)\to 0
$$
that the quadric $Q$ has a degree one zero-cycle over $k$. By a well-known result of Springer~\cite[Ch.VII,~Th.2.3]{Lam}, this contradicts with the absence of $k$-points on $Q$ (for $k=\Rb$, we also obtain this contradiction considering the action of the complex conjugation on an effective zero-cycle of odd degree). Therefore by Lemma~\ref{lemma-selfint},~$M$ is not generated by $d+r-1$ elements.
\end{examp}

\section{Transcendence of zero-cycles and the proof of the main result}

Let us recall some notions and facts from~\cite{GG}. Given a subfield $k_0\subset k$ and a variety~$X_0$ over $k_0$, by $(X_0)_k$ denote the extension of scalars of $X_0$ from $k_0$ to $k$:
$$(X_0)_k=\Spec(k)\times_{\Spec(k_0)}X_0\,.
$$
Let $X$ be a variety over $k$. For simplicity, assume that $X$ is irreducible. Let $d$ be the dimension of $X$. Suppose that $X$ is the extension of scalars of a variety~$X_0$ defined over a subfield $k_0\subset k$, i.e., it is fixed an isomorphism \mbox{$X\cong (X_0)_k$}. We say that an open (respectively, closed) subset in $X$ is defined over $k_0$ if it is the extension of scalars of an open (respectively, closed) subset in $X_0$.

A point $P\in X(k)=X_0(k)$ corresponds to a morphism of schemes \mbox{$P\colon\Spec(k)\to X_0$}. By $\xi_P$ denote its image and put
$$
\trdeg(P/k_0):=\trdeg(k_0(\xi_P)/k_0)\,.
$$
Explicitly, $\trdeg(P/k_0)$ is the transcendence degree over $k_0$ of the field generated by coordinates of $P$ with respect to an arbitrary affine open neighborhood defined over~$k_0$. Given an element $\alpha\in CH^d(X)$, its transcendence degree $\trdeg(\alpha/k_0)$ is the minimal natural number $n$ such that there exists a zero-cycle $\sum_i n_iP_i$ on $X$ over $k$ that represents~$\alpha$ and such that for any $i$, we have $\trdeg(P_i/k_0)\leqslant n$. One defines similarly transcendence degree for elements in $CH^d(X)_{\Qb}:=CH^d(X)\otimes_{\Z}\Qb$ (one considers all representatives with rational coefficients).

\begin{rmk}\label{rmk-bound}
In the above notation, assume that there is an open subset $U\subset X$ defined over $k_0$ such that $\alpha|_U=0$ in $CH^d(U)$. Then $\alpha$ is represented by a zero-cycle supported on $Z=X\smallsetminus U$. Since the dimension of $Z$ is strictly less than $d$ and $Z$ is defined over $k_0$, it follows from the definition of a transcendence degree of zero-cycles that $\trdeg(\alpha/k_0)<d$. This remains true when Chow groups are taken with rational coefficients.
\end{rmk}

The next statement follows directly from~\cite[Th.7]{GG}.

\begin{prop}\label{prop-trans}
Let $X$ be an irreducible smooth projective variety of dimension $d$ over a field of characteristic zero $k$. Assume that $X$ is the extension of scalars of a variety $X_0$ defined over a subfield $k_0\subset k$. Assume that any finitely generated field over $k_0$ can be embedded into $k$ over $k_0$ (equivalently, $k$ is algebraically closed and of infinite transcendence degree over $k_0$). Suppose that there is a point \mbox{$P\in X(k)$} such that $\trdeg(P/k_0)=d$ and the class~$[P]$ of $P$ in~$CH^d(X)_{\Qb}$ satisfies $\trdeg([P]/k_0)<d$. Then $H^0(X,\Omega^d_X)=0$.
\end{prop}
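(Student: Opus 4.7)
The plan is to reduce the statement to the Bloch--Srinivas decomposition of the diagonal~\cite{BS}, as packaged in~\cite[Th.~7]{GG} in a form tailored to the transcendence degree framework introduced above.

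First, I would translate the two hypotheses on $P$ geometrically. The condition $\trdeg(P/k_0)=d=\dim X_0$ forces $\xi_P$ to be the generic point of $X_0$, so $P$ is the image of the tautological point $\delta\in X_0(k_0(X_0))$ under a $k_0$-embedding $\iota\colon k_0(X_0)\hookrightarrow k$. The condition $\trdeg([P]/k_0)<d$ provides a representative $\sum_i n_i[Q_i]$ of $[P]$ in $CH^d(X)_{\Qb}$ with each $\trdeg(Q_i/k_0)<d$; the Zariski closures of the images $\xi_{Q_i}$ in $X_0$ are contained in a proper closed $k_0$-subvariety $Y_0\subsetneq X_0$ of dimension $<d$, and on the base change $Y=(Y_0)_k$ the cycle $\sum_i n_iQ_i$ is supported. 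Equivalently, $[P]|_{(X_0\setminus Y_0)_k}=0$ in $CH^d((X_0\setminus Y_0)_k)_{\Qb}$.

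The next step is the Bloch--Srinivas spread-out, which is the content of~\cite[Th.~7]{GG}. The rational equivalence witnessing $[P]=\sum_i n_i[Q_i]$ is defined over some finitely generated subfield $L\subset k$ containing $\iota(k_0(X_0))$ and all residue fields of the $Q_i$. Realizing $L$ as the function field of a smooth integral $k_0(X_0)$-variety and using that $k$ has infinite transcendence degree over $k_0$ to furnish suitable closed points for a specialization argument, one deduces an equality
$$
[\delta]=\zeta\quad\text{in }CH^d((X_0)_{k_0(X_0)})_{\Qb}
$$
with $\zeta$ supported on $(Y_0)_{k_0(X_0)}$. Viewing this as an equality on the generic fiber of $p_1\colon X_0\times_{k_0}X_0\to X_0$ and spreading it out to a dense open of $X_0$, then clearing denominators, yields the decomposition
$$
N\cdot[\Delta_{X_0}]=\Gamma_1+\Gamma_2\quad\text{in }CH^d(X_0\times_{k_0}X_0)_{\Qb}
$$
for some $N>0$, a proper closed subvariety $D_0\subsetneq X_0$, and cycles $\Gamma_1,\Gamma_2$ supported on $X_0\times Y_0$ and $D_0\times X_0$ respectively.

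Finally, I would apply the correspondence action of this identity to a class $\omega\in H^0(X_0,\Omega^d_{X_0/k_0})$. Passing to resolutions $\widetilde Y_0\to Y_0$ and $\widetilde D_0\to D_0$ and using the projection formula, $(\Gamma_1)^*\omega$ is the pushforward along $\widetilde Y_0\to X_0$ of a class in $H^0(\widetilde Y_0,\Omega^d_{\widetilde Y_0})$, while $(\Gamma_2)^*\omega$ involves the pullback $\omega|_{\widetilde D_0}\in H^0(\widetilde D_0,\Omega^d_{\widetilde D_0})$; both groups vanish because $\dim\widetilde Y_0,\dim\widetilde D_0<d$. Hence $N\omega=N\cdot\Delta_{X_0}^*\omega=0$, so $\omega=0$, and flat base change gives $H^0(X,\Omega^d_X)=0$. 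The main obstacle is the descent/spread-out step carrying the rational equivalence from $k$ down to a decomposition of (a multiple of) the diagonal over $k_0$; this is exactly where the hypothesis that every finitely generated extension of $k_0$ embeds into $k$ is used, and it is also precisely what~\cite[Th.~7]{GG} axiomatizes.
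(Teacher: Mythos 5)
Your proposal is correct and follows essentially the same route as the paper, which simply invokes \cite[Th.7]{GG} and sketches exactly this idea: view $[P]$ as the restriction of the class of the diagonal in $X_0\times X_0$ to $X_0\times\Spec(k_0(X_0))$ and apply the Bloch--Srinivas decomposition of the diagonal \cite{BS}. Your write-up just makes explicit the spread-out/descent and the vanishing of the two correspondence actions on $H^0(X,\Omega^d_X)$, which is precisely the content of the cited theorem.
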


The idea of the proof of Proposition~\ref{prop-trans} is to consider $[P]$ as the restriction of the class of the diagonal $X_0\times X_0$ to the subscheme $X_0\times\Spec(k_0(X_0))$ and to use the Bloch--Srinivas decomposition of diagonal~\cite{BS}.

\medskip

Now we are ready to prove Theorem~\ref{theor-main}. Let $C$ be a smooth projective curve over $\bar\Qb$ of positive genus and put $X_0=C^d$, $X=(X_0)_{\Cb}$. Consider a complex point \mbox{$P=(P_1,\ldots,P_d)\in X(\Cb)$} that does not belong to any subvariety in~$X$ defined over $\bar\Qb$ except for $X$ itself. Such a point exists, because there are countably many subvarieties in~$X$ defined over $\bar\Qb$, while $\Cb$ is uncountable. Explicitly, $P$ satisfies the following condition: choose an affine open subset in $C$ defined over $\bar \Qb$ whose extension of scalars to $\Cb$ contains $P_1,\ldots,P_d$. One requires that the coordinates of $P_1,\ldots,P_d$ with respect to this affine chart generate a field of transcendence degree $d$ over $\bar\Qb$.

Further, consider the following divisors in $X$:
$$
D_i:=\{(x_1,\ldots,x_d)\in X\mid x_i=P_i\},\quad 1\leqslant i\leqslant d
$$
and a (reducible) divisor $D=\cup_i D_i$. Note that $D$ is not defined over $\bar \Qb$. Finally, let \mbox{$U=\Spec(R)$} be any non-empty affine open subset in~$X$ defined over $\bar\Qb$, $L=\OO_X(D)|_U$,~$N$~be the corresponding \mbox{$R$-module}, and let $M=N\oplus R^{\oplus(r-1)}$. Then we have
$$
c_1(L)^d=c_1(\OO_X(D))^d|_U=d!\cdot [P]|_U\in CH^d(U)\,.
$$
Suppose that $[P]|_U=0$ in $CH^d(U)_{\Qb}$. By Remark~\ref{rmk-bound}, we obtain $\trdeg([P]/k_0)<d$ in $CH^d(X)_{\Qb}$. By Proposition~\ref{prop-trans}, this contradicts with the condition
$$
H^0(X,\Omega_X^d)\cong H^0(C,\Omega_C^1)_{\Cb}^{\otimes d}\ne 0\,.
$$
Therefore, $[P]|_U\ne 0$ in $CH^d(U)_{\Qb}$. Consequently, $d!\cdot[P]|_U\ne 0$ in $CH^d(U)$ and by Lemma~\ref{lemma-selfint}, $M$ is not generated by $d+r-1$ elements over~$R$. This finishes the proof of Theorem~\ref{theor-main}.

\end{document}